\newtheorem{theorem}{Theorem}[section]
\newtheorem{corollary}[theorem]{Corollary}
\newtheorem*{claim*}{Claim}
\newtheorem{conjecture}[theorem]{Conjecture}
\newtheorem{problem}[theorem]{Problem}
\theoremstyle{definition}
\newtheorem*{definition*}{Definition}
\author
{
Sergey Norin
}
\thanks{Department of Mathematics and Statistics, McGill University, Montr\'{e}al,
Canada}
\thanks{\texttt{sergey.norin@mcgill.ca}}
\thanks{The research of S.N. was supported by an NSERC Discovery grant.}
\author
{
Raphael Steiner
}
\thanks{R.S.: Department of Mathematics, ETH Z\"{u}rich, Switzerland.}
\thanks{\texttt{raphaelmario.steiner@math.ethz.ch}.
\thanks{The research of R.S. was funded by the SNSF Ambizione Grant No. 216071 of the Swiss National Science Foundation.}
}
\date{\today}
\title{Defective coloring of blowups}
\begin{document}
\maketitle

\begin{abstract}
Given a graph $G$ and an integer $d\ge 0$, its \emph{$d$-defective chromatic number} $\chi^d(G)$ is the smallest size of a partition of the vertices into parts inducing subgraphs with maximum degree at most $d$. Guo, Kang and Zwaneveld recently studied the relationship between the $d$-defective chromatic number of the $(d+1)$-fold (clique) blowup $G\boxtimes K_{d+1}$ of a graph $G$ and its ordinary chromatic number, and conjectured that $\chi(G)=\chi^d(G\boxtimes K_{d+1})$ for every graph $G$ and $d\ge 0$. 

\noindent In this note we disprove this conjecture by constructing graphs $G$ of arbitrarily large chromatic number such that $\chi(G)\ge \frac{30}{29}\chi^d(G\boxtimes K_{d+1})$ for infinitely many $d$. On the positive side, we show that the conjecture holds with a constant factor correction, namely $\chi^d(G\boxtimes K_{d+1})\le \chi(G)\le 2\chi^d(G\boxtimes K_{d+1})$ for every graph $G$ and $d\ge 0$.
\end{abstract}

\section{Introduction}
The 
\emph{chromatic number} $\chi(G)$ of a graph $G$ is the smallest number $k\ge 1$ such that there exists a \emph{proper $k$-coloring} of $G$, i.e., a mapping $c:V(G)\rightarrow S$ for some color-set $S$ of size $k$ such that $c(u)\neq c(v)$ for every edge $uv$ of $G$. Many important open problems in graph theory seek upper bounds for the chromatic number of a graph given some structural constraints on the graph. In many of these settings however, it is still of interest if we can find a coloring that in some sense is approximately proper. To make this more precise, let us say that an \emph{improper coloring} is simply a mapping $c:V(G)\rightarrow S$ for some finite color set $S$. Maybe the most popular measure of the ``improperness'' of the coloring $c$ is 
its maximum monochromatic degree (also called \emph{defect}): For an integer $d\ge 0$, we say that $c$ is \emph{$d$-defective} if for every $s\in S$ the subgraph of $G$ induced by the color class of color $s$ has maximum degree at most $d$, i.e., $\Delta(G[c^{-1}(s)])\le d$. In other words, for every $v\in V(G)$ there are at most $d$ neighbors $u\in N_G(v)$ such that $c(u)=c(v)$. By $\chi^d(G)$, we denote the minimum size of a color set $S$ for which a $d$-defective coloring of $G$ exists.

Defective colorings have been extensively studied in the literature, with important connections to a diverse set of areas in structural and geometric graph theory, and to well-known graph coloring conjectures such as Hadwiger's conjecture. Rather than attempting the elusive task of giving a comprehensive overview, we refer to the 70 page survey article by Wood~\cite{wood} for previous work. 

In this note, we shall be concerned with the $d$-defective chromatic number of blowups of graphs by cliques. Concretely, given a graph $G$ and an integer $d\ge 0$, we will consider $d$-defective colorings of the strong product $G\boxtimes K_{d+1}$, which can be equivalently seen as the graph obtained from $G$ by replacing each vertex by a clique of size $d+1$, and by adding all possible edges between the cliques replacing vertices $u$ and $v$ for every edge $uv\in E(G)$. Formally, we will view the strong product $G\boxtimes K_t$ for some $t\ge 1$ as the graph with vertex set $V(G)\times [t]$ such that two vertices $(u,i)$ and $(v,j)$ are adjacent if and only if $u=v$ and $i\neq j$ or $uv\in E(G)$. We remark in passing that Campbell et al.~\cite{campbell} and Esperet and Wood~\cite{esperetwood} recently also studied (improper) colorings of strong products of graphs. However, their results are not directly relevant to the discussion in this paper. Instead, we shall be concerned with a recent conjecture by Guo, Kang and Zwaneveld~\cite{guo}, which we introduce next. Motivated by studying the tightness of Hoffman's classic spectral bound on the chromatic number~\cite{hoffman} and its extension to the $d$-defective chromatic number by Bilu~\cite{bilu}, they were interested in the connections between the $d$-defective chromatic number of $G\boxtimes K_{d+1}$ and the ordinary chromatic number $\chi(G)$ of $G$ and made the following conjecture, which was also posed as an open problem at the 2025 Graph Theory workshop in Oberwolfach. 

\begin{conjecture}[Guo, Kang and Zwaneveld, cf.~Conjecture~2 in~\cite{guo}]\label{con}
For every graph $G$ and every integer $d\ge 0$, we have $\chi(G)=\chi^d(G\boxtimes K_{d+1})$.
\end{conjecture}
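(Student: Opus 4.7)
The plan is to establish the two inequalities of Conjecture~\ref{con} separately. The direction $\chi^d(G\boxtimes K_{d+1})\le \chi(G)$ is immediate: starting from a proper $\chi(G)$-coloring $\psi$ of $G$, define $c(v,i) := \psi(v)$ on the blowup. Each monochromatic class is then a vertex-disjoint union of cliques of the form $\{v\}\times [d+1]$, so every vertex of the blowup has exactly $d$ monochromatic neighbors, giving a $d$-defective coloring with $\chi(G)$ colors.

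For the nontrivial direction $\chi(G)\le \chi^d(G\boxtimes K_{d+1})$, I would start from a $d$-defective coloring $c:V(G)\times[d+1]\to[k]$ with $k=\chi^d(G\boxtimes K_{d+1})$ and aim to construct a proper $k$-coloring $\phi$ of $G$. Setting $n_v(s):=|\{i:c(v,i)=s\}|$, we have $\sum_s n_v(s)=d+1$, and evaluating the defect condition at a vertex $(v,i)$ of color $s$ yields
$$n_v(s) + \sum_{u\in N_G(v)} n_u(s) \le d+1 \qquad \text{whenever } n_v(s)\ge 1.$$
The task reduces to a list-coloring problem: find $\phi(v)\in S_v:=\{s:n_v(s)\ge 1\}$ avoiding monochromatic edges. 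A natural first attempt is the ``majority'' rule $\phi(v)\in \arg\max_s n_v(s)$: if $\phi(u)=\phi(v)=s$ on some edge, then by the inequality above $n_u(s)+n_v(s)\le d+1$, so neither majority exceeds $(d+1)/2$, which is usable only when every $|S_v|\le 2$. A more flexible attempt is probabilistic: sample $\phi(v)=s$ independently with probability $n_v(s)/(d+1)$; then for each edge $uv$,
$$\Pr[\phi(u)=\phi(v)] \;=\; \frac{1}{(d+1)^2}\sum_s n_u(s)n_v(s).$$
Using the defect constraint, one can bound $\sum_s n_u(s)n_v(s)\le (d+1)^2 - \sum_s n_v(s)^2$, which is strictly less than $(d+1)^2$, giving per-edge conflict probability bounded away from $1$, and one would then invoke the Lov\'{a}sz Local Lemma or an iterative/entropy compression derandomization to get a globally proper $\phi$.

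The step I expect to be the decisive obstacle is bridging the gap between ``per-edge probability $<1$'' and a truly proper coloring: the defect condition only constrains the sum $\sum_{u\in N(v)} n_u(s)$ for each fixed $s$, and does not prevent the vectors $(n_u(s))_s$ and $(n_v(s))_s$ on adjacent cliques from being highly concentrated on the same few colors. In particular, the ratio $\sum_s n_u(s)n_v(s)/(d+1)^2$ can be made close to $1$ by balancing supports evenly, so LLL will not apply beyond very small maximum degree, and no obvious discharging or fractional argument seems to rule out all bad configurations. Because this gap appears genuine rather than an artifact of the method, I would in fact expect this route to fail and would spend serious effort looking for a counterexample: specifically, a small graph $H$ with a carefully engineered $d$-defective coloring of $H\boxtimes K_{d+1}$ that exploits balanced color multiplicities across adjacent cliques to beat $\chi(H)$, then amplified (via a blowup, lexicographic product, or random construction) to get unbounded chromatic number while preserving the gap.
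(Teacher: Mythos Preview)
Your overall assessment is right: the conjecture is false, and the paper disproves it rather than proving it. The easy direction you give matches the paper's, and your instinct to abandon the LLL route and hunt for a counterexample is exactly what the paper does.

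What you are missing is the mechanism. The decisive reformulation is via list coloring: if the $d+1$ colors on a fiber $\{v\}\times[d+1]$ are pairwise distinct, call this set $L(v)$; then the $d$-defect condition at a vertex of color $\alpha\in L(v)$ says exactly that at most $d$ neighbors $u$ of $v$ in $G$ have $\alpha\in L(u)$, i.e.\ the color degree $d_\alpha^L(v)\le d$. This is precisely the hypothesis of Reed's list-coloring conjecture, which Bohman and Holzman disproved: for each $d\ge 2$ there is a pair $(F,L)$ with $|L(v)|=d+1$, all color degrees at most $d$, no proper $L$-coloring, and only $k=2d^3+2d^2+d+3$ colors in total. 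The paper converts $F$ into a graph $G$ with $\chi(G)>k$ by adjoining a $k$-clique $\{v_1,\dots,v_k\}$ with $v_iu\in E(G)$ iff $i\notin L(u)$; any proper $k$-coloring of $G$ is then forced to restrict to an $L$-coloring of $F$. The $d$-defective $k$-coloring of $G\boxtimes K_{d+1}$ places the $d+1$ colors of $L(v)$ bijectively on the fiber of each $v\in V(F)$ and the constant color $t$ on the fiber of $v_t$. Your proposed search (balanced color multiplicities on a small graph, then amplify) does not naturally lead here; the list-coloring translation and the Bohman--Holzman input are the real lever.

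As an aside, the paper does salvage $\chi(G)\le 2\chi^d(G\boxtimes K_{d+1})$, not via LLL but via Haxell's independent-transversal theorem applied to a doubled auxiliary graph, which cleanly sidesteps the per-edge-probability obstruction you identified.
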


By lifting an optimal proper coloring of $G$, one can easily obtain a $d$-defective coloring of $G\boxtimes K_{d+1}$ with $\chi(G)$ colors, and hence $\chi^d(G\boxtimes K_{d+1})\le \chi(G)$ always holds. Thus, at the core of Conjecture~\ref{con} lies the inequality $\chi(G)\le\chi^d(G\boxtimes K_{d+1})$. Guo, Kang and Zwaneveld provided several pieces of evidence supporting this conjecture, for example by proving it for perfect graphs, graphs with chromatic number at most four, and by proving fractional, clustered and spectral analogues of Conjecture~\ref{con}.

Despite these promising signs, in this paper we refute Conjecture~\ref{con} by showing that it fails for graphs of chromatic number at least $30$ and for every $d\ge 2$.

\begin{theorem}\label{thm1}
For every $d\ge 2$ there exists a graph $G$ such that $\chi^d(G\boxtimes K_{d+1})\le k:=2d^3+2d^2+d+3$ but $\chi(G)>k$. 
\end{theorem}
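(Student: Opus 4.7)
The plan is, for each $d \ge 2$, to exhibit an explicit graph $G$ whose chromatic number exceeds $k := 2d^3 + 2d^2 + d + 3$ but whose $(d+1)$-clique blowup admits a $d$-defective coloring with only $k$ colors, thereby refuting Conjecture~\ref{con}.

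First I would reformulate the defective coloring condition. A $d$-defective coloring of $G \boxtimes K_{d+1}$ with palette $[k]$ is equivalent to a load function $L : V(G) \times [k] \to \mathbb{Z}_{\ge 0}$ satisfying $\sum_{c} L(v,c) = d+1$ for every $v \in V(G)$ and $\sum_{v \in N_G[u]} L(v,c) \le d+1$ for every $u \in V(G)$ and every $c \in [k]$. Here $L(v,c)$ is the number of slots of the clique replacing $v$ that receive color $c$; the defect of any such slot is $L(v,c) - 1 + \sum_{u \sim v} L(u,c)$, and this is at most $d$ precisely when the closed-neighborhood inequality holds.

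Next I would search for a suitable $G$. Rescaling $x(v,c) := L(v,c)/(d+1)$, the LP relaxation asks for a fractional coloring of the closed-neighborhood hypergraph of $G$ with $k$ colors (a condition implied by $\chi_f(G^2) \le k$), and the integrality restriction is that the optimal solution has denominator dividing $d+1$. Thus I want a graph with $\chi(G) > k$ but whose closed-neighborhood hypergraph has fractional chromatic number at most $k$ with a $(d+1)$-integral solution. Families with a large gap between $\chi$ and $\chi_f$ — Kneser or Schrijver graphs, Mycielskians, or certain random constructions — are the natural candidates. The arithmetic form $k = (d+1)(2d^2+1) + 2$ strongly hints at a palette partitioned into $d+1$ blocks of size $2d^2+1$ together with two additional colors, which suggests a construction for $G$ whose vertices naturally correspond to choices out of the blocks (for instance, a Cayley graph or a product of a simpler base graph with an algebraic object respecting the block decomposition), and a load $L_v$ picking roughly one representative per block plus occasional use of the extras.

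Once $G$ is chosen the proof reduces to two verifications: proving $\chi(G) > k$ (via a topological Kneser-type bound, a local-sparsity argument, or an explicit witness to high chromatic number, depending on the family used) and explicitly constructing a valid load $L$, either algebraically from the structure of $G$ or by rounding a fractional coloring of its closed-neighborhood hypergraph. The main obstacle is the tight margin of a single color: forcing the closed-neighborhood LP to be feasible with $k$ colors tends to require structural simplicity that simultaneously bounds $\chi(G)$, so the construction must exploit a graph whose $\chi/\chi_f$ gap survives the passage to the closed-neighborhood hypergraph, while remaining compatible with the denominator-$(d+1)$ integrality constraint. I expect the bulk of the technical work to be the uniform verification of the closed-neighborhood capacity bound $\sum_{v \in N_G[u]} L(v,c) \le d+1$ for every pair $(u,c)$, which is where the precise numerical form of $k$ and the structural choice of $G$ will have to match up exactly.
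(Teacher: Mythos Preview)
Your load-function reformulation is correct (with the minor caveat that the closed-neighbourhood bound $\sum_{v\in N_G[u]} L(v,c)\le d+1$ only needs to hold at vertices $u$ with $L(u,c)\ge 1$), and it is a reasonable starting point. But from that point on the proposal is a search plan rather than a proof: you never commit to a graph $G$, and the families you contemplate (Kneser, Schrijver, Mycielskians) together with the fractional-chromatic heuristic do not obviously lead anywhere. In particular, the reading of $k=(d+1)(2d^2+1)+2$ as ``$d+1$ blocks of size $2d^2+1$ plus two extras'' is a red herring; that is not where the number comes from.

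The missing idea is the connection to \emph{list coloring}. Observe that if every $L(v,\cdot)$ is $\{0,1\}$-valued, then the load data is exactly a list assignment with $(d+1)$-element lists, and the closed-neighbourhood bound becomes the color-degree condition $d^L_\alpha(v)\le d$. Bohman and Holzman constructed, for every $d\ge 2$, a graph $F$ with such a list assignment using exactly $2d^3+2d^2+d+3$ colors in total and admitting no proper $L$-coloring; this is the source of the constant $k$. One then converts ``not $L$-colorable'' into ``$\chi>k$'' by the standard gadget: add a clique $\{v_1,\dots,v_k\}$ and join each $u\in V(F)$ to every $v_i$ with $i\notin L(u)$. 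In the resulting graph $G$, any proper $k$-coloring is forced (after permuting colors) to assign $c(v_i)=i$, hence $c(u)\in L(u)$ for every $u\in V(F)$, contradicting non-$L$-colorability; so $\chi(G)>k$. For the defective coloring of $G\boxtimes K_{d+1}$, give the $d+1$ copies of each $u\in V(F)$ the $d+1$ colors in $L(u)$, and give all $d+1$ copies of $v_t$ the single color $t$; the color-degree bound and the definition of the gadget edges make this $d$-defective. Your fractional/LP viewpoint does not surface this list-coloring structure, and without it there is no construction and hence no proof.
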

We remark that the above result is tight in terms of $d$, in the sense that Conjecture~\ref{con} holds for $d<2$. This follows from the result of  Guo, Kang and Zwaneveld~\cite[Corollary 20]{guo} on the clustered variant of Conjecture~\ref{con} as the maximum size of a monochromatic component in a $1$-defective coloring  is at most two.

A natural follow-up question is to ask how badly the conjecture fails. For example, is there at least an absolute constant $c>0$ such that for every graph $G$ and every $d\ge 0$ we have $\chi(G)\le c\cdot \chi^d(G\boxtimes K_{d+1})$? In our next result, we show that this is indeed the case, with $c=2$.

\begin{theorem}\label{thm2}
For every graph $G$ and every integer $d\ge 1$, we have $\chi(G)\le 2\cdot \chi^d(G\boxtimes K_{d})\le 2\cdot \chi^d(G\boxtimes K_{d+1})$.
\end{theorem}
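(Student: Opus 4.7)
The second inequality $\chi^d(G\boxtimes K_d)\le\chi^d(G\boxtimes K_{d+1})$ is immediate: $G\boxtimes K_d$ is the induced subgraph of $G\boxtimes K_{d+1}$ on $V(G)\times[d]$, so restricting any $d$-defective $k$-coloring of $G\boxtimes K_{d+1}$ gives a $d$-defective $k$-coloring of $G\boxtimes K_d$.

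For the main inequality $\chi(G)\le 2\chi^d(G\boxtimes K_d)$, I set $k:=\chi^d(G\boxtimes K_d)$ and fix an optimal $d$-defective $k$-coloring $c:V(G)\times[d]\to[k]$. For each $v$ and $s$ I let $a_v(s):=|\{i\in[d]:c(v,i)=s\}|$, so that $\sum_s a_v(s)=d$. The defective condition at a copy $(v,i)$ of color $s$ reads $a_v(s)-1+\sum_{u\in N_G(v)}a_u(s)\le d$, which in particular forces
\[
a_u(s)+a_v(s)\le d+1 \qquad \text{whenever } uv\in E(G) \text{ and } a_u(s),a_v(s)\ge 1.
\]

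The plan is to construct a map $\phi\colon V(G)\to[k]$ such that for each $s\in[k]$ the induced subgraph $G[\phi^{-1}(s)]$ has chromatic number at most~$2$; a proper $2$-coloring of each $G[\phi^{-1}(s)]$ with a disjoint palette $\{(s,0),(s,1)\}$ then assembles into a proper $2k$-coloring of $G$. A natural choice is $\phi(v):=\arg\max_s a_v(s)$, with a consistent tie-breaking rule. Call $v$ \emph{heavy} if $a_v(\phi(v))\ge\lceil(d+1)/2\rceil$ and \emph{light} otherwise. For heavy $v$, $\phi(v)$ is unique (as two coordinates of $a_v$ of size $\lceil(d+1)/2\rceil$ would violate $\sum_s a_v(s)=d$), and for any edge $uv$ between heavy vertices with $\phi(u)=\phi(v)=s$, the bound $a_u(s)+a_v(s)\le d+1$ combined with $a_u(s),a_v(s)\ge\lceil(d+1)/2\rceil$ forces equality $a_u(s)+a_v(s)=d+1$; then $\sum_{u'\in N_G(v)}a_{u'}(s)\le d+1-a_v(s)\le\lfloor(d+1)/2\rfloor<2\lceil(d+1)/2\rceil$, so $v$ can have at most one heavy neighbor in $\phi^{-1}(s)$. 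Hence the heavy portion of $G[\phi^{-1}(s)]$ has maximum degree at most $1$ and is therefore bipartite.

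The main obstacle is the treatment of the light vertices. For these $\max_s a_v(s)<\lceil(d+1)/2\rceil$, the set of maximizers is not a singleton, and the inequality $a_u(s)+a_v(s)\le d+1$ is slack by too much to force low degree in $G[\phi^{-1}(s)]$ directly. The crux of the argument is to exploit the freedom in choosing $\phi(v)$ among the light maximizers so that the resulting $G[\phi^{-1}(s)]$ remains $2$-colorable. I would attack this either by an iterative ``swap'' procedure that repairs any odd cycle appearing in $G[\phi^{-1}(s)]$ by flipping the $\phi$-value of a well-chosen light vertex, or by a probabilistic argument (random $\phi(v)$ proportional to $a_v/d$, combined with a Lovász local lemma applied to odd-cycle bad events) showing that a globally consistent choice exists. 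Verifying that such a $\phi$ can be produced is where the factor $2$ becomes essential, and I expect it to be the most delicate step of the proof.
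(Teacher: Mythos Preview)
Your second inequality matches the paper, and your key inequality $a_v(s)+\sum_{u\in N_G(v)}a_u(s)\le d+1$ (valid whenever $a_v(s)\ge 1$) is correct and useful. But the argument stalls at the light vertices, and neither repair strategy you sketch is close to working as stated. A swap that removes an odd cycle in $G[\phi^{-1}(s)]$ may create one in $G[\phi^{-1}(s')]$, and you offer no potential function that would guarantee termination. For the probabilistic route, the bad events ``a given odd cycle is monochromatic under $\phi$'' can involve arbitrarily long cycles, so their probabilities do not decay against the dependency degree and the Lov\'asz Local Lemma does not apply in any standard way. Even your partial result covers only the heavy--heavy edges inside $G[\phi^{-1}(s)]$; odd cycles through light vertices (or through heavy--light edges) are untouched. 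As written, the proposal does not yield a proof.

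The paper avoids the heavy/light dichotomy entirely and instead invokes Haxell's independent-transversal theorem. It builds an auxiliary graph $H$ on $V(G)\times[d]\times\{1,2\}$ in which $(g_1,i,a)$ and $(g_2,j,b)$ are adjacent iff $g_1g_2\in E(G)$, $c(g_1,i)=c(g_2,j)$, and $a=b$; since $c$ is $d$-defective, $\Delta(H)\le d$. The fibers $V_g=\{g\}\times[d]\times\{1,2\}$ have size $2d\ge 2\Delta(H)$, so Haxell's theorem provides an independent transversal $I$, and setting $c'(g):=(c(g,i),a)$ for the unique $(g,i,a)\in I$ gives a proper $2k$-coloring of $G$. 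In your language this produces \emph{simultaneously} the assignment $\phi(g)=c(g,i)$ and a proper $2$-coloring of each $G[\phi^{-1}(s)]$, with no case analysis; the factor $2$ is precisely what makes Haxell's hypothesis $|V_g|\ge 2\Delta(H)$ hold.
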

It would be interesting to determine the smallest value of $c$ for which the inequality $\chi(G)\le c\cdot \chi^d(G\boxtimes K_{d+1})$ always holds. Theorem~\ref{thm1} for $d=2$ yields one example of a graph showing that $c\ge \frac{30}{29}$ is necessary. 
\begin{problem}
Determine 
$$\sup\left\{\frac{\chi(G)}{\chi^d(G\boxtimes K_{d+1})}\bigg\vert G \text{ is a graph and }d\ge 0\right\}.$$
\end{problem}

Another natural question is whether assuming that either $\chi(G)$ or $d$ is sufficiently large can help to reduce the constant factor in Theorem~\ref{thm2} arbitrarily close to $1$. Unfortunately, by bootstrapping Theorem~\ref{thm1} we can show this is not the case.
\begin{corollary}\label{cor}
    For all positive integers $d\equiv 2\text{ }(\text{mod }3)$ and $k\equiv 0\text{ }(\text{mod }30)$ there exists a graph $G$ with $\chi(G)=k$ and $\chi(G)\ge \frac{30}{29}\cdot \chi^d(G\boxtimes K_{d+1})$.
\end{corollary}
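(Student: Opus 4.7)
The plan is to take the graph from Theorem~\ref{thm1} applied at $d=2$ (where $2d^3+2d^2+d+3$ evaluates to $29$) as a seed and bootstrap it in two independent directions: first raising the defect parameter $d$, then raising the chromatic number to $k$. From Theorem~\ref{thm1} at $d=2$ I obtain a graph $G_0$ with $\chi^2(G_0\boxtimes K_3)\le 29$ and $\chi(G_0)\ge 30$; by passing to a minimum-order induced subgraph of chromatic number $30$, I may assume there is a graph $H$ with $\chi(H)=30$ and $\chi^2(H\boxtimes K_3)\le 29$, using that $\chi^d$ is monotone under induced subgraphs.

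To lift $H$ from $d=2$ to an arbitrary $d\equiv 2\pmod 3$, write $d+1=3m$. Since $K_3\boxtimes K_m=K_{3m}$, one has $H\boxtimes K_{d+1}=(H\boxtimes K_3)\boxtimes K_m$. Given an optimal $2$-defective $29$-coloring $c$ of $L:=H\boxtimes K_3$, I consider the ``pullback'' $c'(v,j):=c(v)$ on $L\boxtimes K_m$. The monochromatic degree at a vertex $(v,j)$ is $(m-1)$ from its own $K_m$-fiber plus at most $2m$ coming from the at most $2$ monochromatic $L$-neighbors of $v$ each replicated across the $m$ copies, for a total of $3m-1=d$. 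Hence $\chi^d(H\boxtimes K_{d+1})\le 29$ while $\chi(H)=30$.

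To reach chromatic number $k$, set $t:=k/30$ and let $G:=H_1\vee\cdots\vee H_t$ be a join of $t$ disjoint copies of $H$, so that $\chi(G)=30t=k$ by additivity of $\chi$ under joins. A direct check shows that strong product by a clique distributes over the join: $G\boxtimes K_{d+1}=\bigvee_{i=1}^{t}(H_i\boxtimes K_{d+1})$. On each summand I use the $29$-color $d$-defective coloring from the previous step, with palettes chosen pairwise disjoint across different $i$. Because vertices in different join-summands carry different colors, every monochromatic neighborhood remains inside a single $H_i\boxtimes K_{d+1}$, preserving $d$-defectiveness. This gives $\chi^d(G\boxtimes K_{d+1})\le 29t=29k/30$, yielding the required ratio $\chi(G)/\chi^d(G\boxtimes K_{d+1})\ge 30/29$.

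The only step requiring genuine checking is the defect accounting in the lift — specifically, ensuring that the pullback coloring really achieves defect exactly $d=3m-1$, which hinges on the identification $K_3\boxtimes K_m=K_{d+1}$ and on separating contributions from the new $K_m$-fiber and the replicated $L$-neighbors. The remainder of the argument is a standard bootstrap via additivity of $\chi$ under joins and the obvious disjoint-palette construction for $\chi^d$ of a join.
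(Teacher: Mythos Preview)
Your proof is correct and follows essentially the same approach as the paper: seed with the $d=2$ case of Theorem~\ref{thm1}, lift the defect by grouping the $d+1$ copies into blocks of size $(d+1)/3$ (your framing via $(H\boxtimes K_3)\boxtimes K_m$ is the same construction as the paper's block map $i\mapsto i^\ast$), and raise the chromatic number by joining $k/30$ copies with disjoint palettes. The only difference is presentational: you carry out the defect lift and the join as two separate steps, whereas the paper folds both into a single coloring formula.
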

As a final remark, let us mention that the family of graphs from Corollary~\ref{cor} gives new examples of graphs for which the chromatic number is separated from the Hoffman lower bound~\cite{hoffman} (i.e., the quantity $\frac{\lambda_1-\lambda_n}{-\lambda_n}$, where $\lambda_1,\lambda_n$ denote the largest and smallest eigenvalue of the adjacency matrix of $G$, respectively) by a constant factor. Indeed, it was proved by Guo et al. (see the proof of Lemma~11 in~\cite{guo}) that the Hoffman lower bound also forms a lower bound on $\chi^d(G\boxtimes K_{d+1})$ for every $d\ge 0$. Our claim then follows directly from Corollary~\ref{cor}.

\medskip

\paragraph*{\textbf{Notation.}} For an integer $k\ge 1$, we use $[k]=\{1,\ldots,k\}$ as a shorthand for the first $k$ positive integers. Given a graph $G$, we denote by $V(G)$ its vertex set and by $E(G)$ its edge set. We further use $\Delta(G)$ to denote the maximum degree of $G$. For a vertex $v\in V(G)$, we denote by $N_G(v)$ its neighborhood. For a subset of vertices $X\subseteq V(G)$, we denote by $G[X]$ the induced subgraph of $G$ with vertex set $X$. 

\section{Proofs of the results}
In this section, we present the proofs of our results. We start with the disproof of Conjecture~\ref{con}, by establishing Theorem~\ref{thm1}. The proof of Theorem~\ref{thm1} is based on a result of Bohman and Holzman~\cite{bohman} from 2001. To state this result, we need to introduce some additional notation. Given a graph $G$ and an assignment $L:V(G)\rightarrow 2^\mathbb{N}$ of finite sets (referred to as \emph{lists}) to its vertices, an \emph{$L$-coloring} is defined as a proper coloring $c:V(G)\rightarrow \mathbb{N}$ such that each vertex picks a color from its list, that is, $c(v)\in L(v)$ for every $v\in V(G)$. Further, for any vertex $v\in V(G)$ and any color $\alpha\in L(v)$, let us denote by $d_\alpha^L(v)$ its \emph{color degree}, defined as the number of neighbors $u\in N_G(v)$ such that $\alpha\in L(u)$. 

With this notation at hand, we can state the main result of Bohman and Holzman from~\cite{bohman} as follows. Their result disproved a list coloring conjecture earlier made by Reed~\cite{reed}.
\begin{theorem}[Bohman and Holzman~\cite{bohman}]\label{thm:bohman}
For every $d\ge 2$ there exists a graph $F$ with a list-assignment $L$ such that 
\begin{itemize}
    \item $|L(v)|\ge d+1$ for every $v\in V(F)$,
    \item  $d_\alpha^L(v)\le d$ for every $v\in V(F)$ and $\alpha \in L(v)$,
    \item there exists no $L$-coloring of $F$, and
    \item $\left|\bigcup_{v\in V(F)}L(v)\right|=2d^3+2d^2+d+3$.
\end{itemize}
\end{theorem}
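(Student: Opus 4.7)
My plan is to establish Theorem~\ref{thm:bohman} by an explicit construction of the pair $(F, L)$. I would set $N := 2d^3 + 2d^2 + d + 3$ and take $[N]$ as the color palette. The graph $F$ would be built as a vertex-disjoint union of $m = m(d)$ cliques $K^{(1)}, \ldots, K^{(m)}$ of size $d+1$ each, with the $d+1$ lists inside each clique forming \emph{pairwise disjoint} $(d+1)$-subsets of $[N]$. This has the useful consequence that, restricted to any single clique $K^{(j)}$, every $L$-coloring is forced to produce a rainbow assignment: it uses precisely the $d+1$ colors from one particular list, with each vertex receiving its own list color. Cross-clique edges would then be added between pairs of vertices whose lists share a color, encoding global compatibility constraints across cliques.

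Verification of three of the four bullets should be straightforward once the construction is in place. The list-size and total-palette conditions $|L(v)| = d+1$ and $|\bigcup_v L(v)| = N$ are immediate. The color-degree bound $d_\alpha^L(v) \le d$ would be checked by controlling the intersection pattern of lists across cliques: I would arrange that each color $\alpha$ appears in exactly a bounded number of lists overall, with the cross-clique edges chosen so that at most $d$ neighbors of any vertex $v$ have $\alpha$ in their list (whenever $\alpha \in L(v)$). The arithmetic constraints here drive the choice of $m(d)$ and of the pairwise list-intersection pattern, which together must be consistent with having exactly $N$ colors overall.

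The key obstacle is non-$L$-colorability. My plan here is a global counting argument by contradiction: assume $c$ is an $L$-coloring. For each color $\alpha \in [N]$, the color class $c^{-1}(\alpha)$ is an independent set in $F$ and thus, by the clique structure, intersects each $K^{(j)}$ in at most one vertex. A careful accounting of which cliques can potentially receive $\alpha$ (determined by the list design) yields an upper bound on $|c^{-1}(\alpha)|$ for every $\alpha$; summing these over $\alpha$ must equal $|V(F)| = m(d+1)$, and the construction should be designed so that this sum falls short by exactly one, producing the contradiction. The delicate part is arranging for the bound $N = 2d^3 + 2d^2 + d + 3$ to emerge sharply from the counting---one additional color would suffice to destroy the obstruction. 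I expect this step to require a precisely tuned combinatorial design, for instance based on an affine or Latin-square-type structure whose parameters are linked to $d$, and it would form the technical heart of the proof.
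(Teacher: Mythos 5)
A preliminary remark: the paper does not prove Theorem~\ref{thm:bohman} at all --- it is imported as a black box from Bohman and Holzman~\cite{bohman} --- so the only thing to assess is whether your reconstruction could work. It cannot, and the obstruction is structural rather than a matter of missing details. In your template every two vertices whose lists share a color are adjacent; combined with the requirement $d_\alpha^L(v)\le d$ this forces every color to occur in at most $d+1$ lists. Now look at the bipartite incidence graph between $V(F)$ and the palette, joining $v$ to the colors of $L(v)$: each vertex of $F$ has degree $d+1$ and each color has degree at most $d+1$, so Hall's condition holds and there is a system of distinct representatives, i.e.\ an injective choice $c(v)\in L(v)$. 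An injective list choice is vacuously a proper coloring, so \emph{every} instance fitting your design is $L$-colorable and the third bullet is unattainable. The same arithmetic also collapses your counting scheme: since your cross-clique edges make each color class a singleton, you would need $|V(F)|=m(d+1)>N$ vertices to get a contradiction, while the multiplicity bound gives $m(d+1)\le N$. The moral is that any genuine counterexample must contain \emph{non-adjacent} vertices sharing a list color (so that some colors occur in at least $d+2$ lists), and non-$L$-colorability must be extracted from the adjacency structure, not from cardinalities alone. On top of this, you explicitly defer the decisive step (``a precisely tuned combinatorial design \dots the technical heart of the proof''), so even setting the obstruction aside the proposal is a plan rather than a proof.

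For orientation, the actual Bohman--Holzman construction has a quite different shape: it is a hierarchical forcing gadget. Roughly, there is a root vertex $r$ with list $[d+1]$, and for each color $i\in L(r)$ one attaches a gadget containing $d$ pairwise non-adjacent neighbours of $r$ whose lists contain $i$ (so the color degree of $i$ at $r$ is exactly $d$), engineered so that every $L$-coloring of the gadget assigns $i$ to at least one of these neighbours; hence $r$ can receive no color. The gadgets are built from smaller blocks over essentially disjoint palettes of about $2d^2$ fresh colors each, which is where the count $(d+1)\cdot 2d^2+(d+1)+2=2d^3+2d^2+d+3$ comes from, and non-colorability is proved by propagating the forcing up the hierarchy rather than by a global count. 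If you want to include a proof, either reproduce that construction or simply cite~\cite{bohman} as the paper does.
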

Equipped with this result, we can now present the proof of Theorem~\ref{thm1}. 
\begin{proof}[Proof of Theorem~\ref{thm1}]
Let $d\ge 2$ be any given integer, and let $F$ be the graph and $L$ the list assignment as given by Theorem~\ref{thm:bohman}. Set $k:=2d^3+2d^2+d+3=\left|\bigcup_{v\in F}L(v)\right|$. By reducing list sizes if necessary and renaming colors, we may assume w.l.o.g. in the remainder of the proof that $|L(v)|=d+1$ for every $v\in V(F)$ and $\bigcup_{v\in V(F)}{L(v)} \subseteq [k]$. 

We now construct a graph $G$ based on $F$ as follows. The vertex set of $G$ equals $V(G)=V(F)\cup \{v_1,\ldots,v_k\}$, where $v_1,\ldots,v_k\notin V(F)$ are $k$ additional ``new'' vertices. The edge set of $G$ is obtained from the edge set of $F$ by adding the following additional edges: The vertices $\{v_1,\ldots,v_k\}$ are made pairwise adjacent, i.e. form a clique in $G$. Furthermore, for every $u\in V(F)$ and $i\in [k]$, we have $uv_i\in E(G)$ if and only if $i\notin L(u)$. This finishes the description of $G$. We now claim that it satisfies the properties required by the theorem statement, that is, $\chi^d(G\boxtimes K_{d+1})\le k$ and $\chi(G)>k$. 

The first inequality can be easily verified as follows: For each vertex $v\in V(G)$, let $L(v)=\{\alpha_1^v,\ldots,\alpha_{d+1}^v\}$ be an enumeration of the colors in its list, and let us define a mapping $c^d:V(G\boxtimes K_{d+1})\rightarrow [k]$ by setting $c^d(v,i):=\alpha_i^v$ for every $(v,i)\in V(F)\times [d+1]$, as well as $c^d(v_t,i):=t$ for every $t\in [k]$ and $i\in [d+1]$. We claim that $c^d$ is a $d$-defective coloring of $G\boxtimes K_{d+1}$. First of all, note that by our definition of the edges in $G$, for every $t\in [k]$ the vertices in the fiber $\{v_t\}\times [d+1]$ are only connected to the vertices in fibers of the form $\{u\}\times [d+1]$ for some $u$ such that $t\notin L(u)$ or of the form $\{v_{t'}\}\times [d+1]$ for some $t'\neq t$. For both cases, the definition of $c^d$ readily implies that there are no monochromatic edges spanned between the fiber $\{v_t\}\times [d+1]$ and the rest of the graph $G\boxtimes K_{d+1}$. Hence, each fiber $\{v_1\}\times [d+1],\ldots,\{v_k\}\times [d+1]$ spans their own monochromatic component of the coloring $c^d$ of $G\boxtimes K_{d+1}$, and the maximum degree in these components is trivially exactly $d$. Hence, in the following it suffices to verify that the restriction of the coloring $c^d$ to the induced subgraph $F\boxtimes K_{d+1}$ is $d$-defective. To see this, consider any vertex $(v,i)\in V(G\boxtimes K_{d+1})$. Then by definition of $c^d$, the number of neighbors of $(v,i)$ in $G\boxtimes K_{d+1}$ that are assigned the same color as $(v,i)$ by $c^d$, equals exactly the number of neighbors $u$ of $v$ in $G$ such that $L(u)$ contains the color $c^d(v,i)=\alpha_i^v$. In other words, the degree of $(v,i)$ in its color class induced by $c^d$ equals the color degree $d_{\alpha_i^v}^L(v)$, which by our choice of $F$ is bounded from above by $d$. This confirms that 
$c^d$ is a $d$-defective coloring of $V(G\boxtimes K_{d+1})$. 

It remains to show that $\chi(G)>k$. Towards a contradiction, suppose there exists a proper $k$-coloring $c:V(G)\rightarrow [k]$ of $G$. Since $\{v_1,\ldots,v_k\}$ form a clique of size $k$ in $G$, possibly after permuting colors we may then assume that $c(v_i)=i$ for every $i\in [k]$. By definition of $G$, every vertex $v\in V(F)$ is adjacent to all the vertices $\{v_i \:|\: i \in [k]\setminus L(v)\}$. Hence, $c(v)$ must be distinct from all colors in $[k]\setminus L(v)$, in other words, we must have $c(v)\in L(v)$ for every $v\in V(F)$. But then the coloring $c$, restricted to $F$, forms an $L$-coloring of $F$, contradicting that we have chosen $F$ such that no $L$-coloring exists. This is the desired contradiction, showing that indeed $\chi(G)>k$. This concludes the proof. 
\end{proof}
With Theorem~\ref{thm1} proved, we next present the deduction of Corollary~\ref{cor} from Theorem~\ref{thm1}.
\begin{proof}[Proof of Corollary~\ref{cor}]
Let $d, k$ be positive integers such that $d\equiv 2\text{ }(\text{mod }3)$ and $k\equiv 0\text{ }(\text{mod }30)$.

By applying Theorem~\ref{thm1} with defect $2$, we find that there exists a graph $G_0$ such that $\chi^2(G_0\boxtimes K_3)\le 29$ and $\chi(G_0)\ge 30$. By removing vertices if necessary, we may assume w.l.o.g. that $\chi(G_0)=30$. Let $m:=\frac{k}{30}$, and let $G'$ be the graph obtained from the disjoint union of $m$ copies $G_1,\ldots,G_m$ of $G_0$ by adding all possible edges between $G_i$ and $G_j$ for all distinct $i,j \in [m]$. It is then easy to see that $\chi(G')=m \cdot \chi(G_0)=30m=k$.

Next we would like to show that $\chi^d(G'\boxtimes K_{d+1})\le 29m$. To that end, fix a $2$-defective coloring $c:V(G_0\boxtimes K_3)\rightarrow [29]$ of $G_0\boxtimes K_3$. Now, let $c':V(G'\boxtimes K_{d+1})\rightarrow [29m]$ be defined as follows: 

For every vertex $(v,i)\in V(G')\times [d+1]$, we set $c'(v,i):=c(v^\ast,i^\ast)+29(t-1)$, where $t\in [m]$ is the unique index such that $v\in V(G_t)$, $v^\ast\in V(G_0)$ denotes the vertex in $G_0$ corresponding to $v$, and $i^\ast\in [3]$ is unique such that $1+\frac{d+1}{3}(i^\ast-1)\le i\le \frac{d+1}{3}i^\ast$.

We now want to bound the maximum monochromatic degree in the coloring $c'$. Note that $c'$ assigns disjoint sets of colors on $V(G_1)\times [d+1],\ldots,V(G_m)\times [d+1]$. Thus, w.l.o.g. it suffices to bound the maximum monochromatic degree in the restriction of $c'$ to $V(G_1)\boxtimes K_{d+1}$. So, consider any vertex $(v,i)\in V(G_1)\boxtimes [d+1]$ and let $(v^\ast, i^\ast)\in V(G_0)\times [3]$ be defined as above. Let $N$ denote the set of vertices in the closed neighborhood of $(v,i)$ in $G_1\boxtimes K_{d+1}$ which gets assigned the same color as $(v,i)$ by $c'$. We can then see from the definition of $c'$ that $(u,j)\in N$ if and only if $(u^\ast,j^\ast)$ is a vertex in the closed neighborhood of $(v^\ast,i^\ast)$ in $G_1\boxtimes K_3$ that gets assigned the same color as $(u^\ast,j^\ast)$ by $c$, where $u^\ast$ is the vertex of $G_0$ corresponding to $u$ and $j^\ast\in [3]$ is such that $1+\frac{d+1}{3}(j^\ast-1)\le j\le \frac{d+1}{3}j^\ast$. By our choice of $c$ there can be in total at most three such vertices $(u^\ast,j^\ast)$, and hence in total at most $3\cdot \frac{d+1}{3}=d+1$ vertices in $N$. Hence, there are at most $|N|-1\le d$ neighbors of $(v,i)$ with the same color as $(v,i)$ under $c'$. This shows that $c'$ is indeed a $d$-defective coloring. Hence, we have $\chi^d(G'\boxtimes K_{d+1})\le 29m$, as desired. This concludes the proof of the corollary.
\end{proof}
We proceed with the proof of our second main result, Theorem~\ref{thm2}. It is based on the following sufficient condition for so-called \emph{independent transversals} due to Haxell~\cite{haxell}. Given a graph $H$ and a partition $(V_1,\ldots,V_\ell)$ of its vertex set, an independent transversal of this partition in $G$ is defined to be a subset $I\subseteq V(G)$ such that $I$ is independent in $G$ and $|I\cap V_i|=1$ for every $i\in [\ell]$. 
\begin{theorem}[cf. Theorem~2 in~\cite{haxell}]\label{thm:haxell}
If $H$ is a graph with a partition $(V_1,\ldots,V_\ell)$ of its vertex set such that $|V_i|\ge 2\cdot \Delta(H)$ for every $i\in [\ell]$, then there exists an independent transversal.
\end{theorem}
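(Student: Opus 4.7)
The plan is to prove Haxell's theorem by contradiction, using an iterative exchange argument (following Haxell's original approach). Suppose no independent transversal exists. Among all partial independent transversals---independent sets $I\subseteq V(H)$ meeting each $V_i$ in at most one vertex---pick one $I$ of maximum cardinality, and, subject to that, minimizing the potential $\Phi(I):=|N_H(I)\setminus I|$ (the number of vertices outside $I$ that are dominated by $I$). By assumption $|I|<\ell$, so some part, say $V_\ell$, is uncovered, and maximality of $|I|$ forces every $v\in V_\ell$ to have at least one neighbor in $I$ (otherwise $I\cup\{v\}$ would be a strictly larger partial IT).

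Next I would extract a minimal ``covering'' subset: let $R\subseteq I$ be inclusion-minimal subject to $V_\ell\subseteq N_H(R)$. Minimality of $R$ provides, for each $x\in R$, a private witness $v_x\in V_\ell$ whose only neighbor in $R$ is $x$. Perform the exchange $I_x:=(I\setminus\{x\})\cup\{v_x\}$. A direct verification shows $I_x$ is again a partial IT of the same size as $I$, but now missing the part $V_{j(x)}$ that contained $x$. Applying the same reasoning to $I_x$, every vertex of $V_{j(x)}$ must have a neighbor in $I_x$, lest $I_x$ admit an extension contradicting the maximality of $|I|$.

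Now comes the counting step, which is where the hypothesis $|V_i|\ge 2\Delta(H)$ is used crucially. The symmetric difference $N_H(I_x)\triangle N_H(I)$ is controlled entirely by the single swap from $x$ to $v_x$, so only up to $\Delta(H)$ vertices of $V_{j(x)}$ can become ``newly dominated'' by $I_x$ relative to $I$. Because $|V_{j(x)}|\ge 2\Delta(H)$, there must exist a vertex $w\in V_{j(x)}$ that was already dominated by $I$ (and hence has a neighbor in $I\setminus\{x\}$) yet is not dominated via $v_x$. Replacing $v_x$ by such a $w$, or iterating the swap, one produces a maximum partial IT whose potential $\Phi$ is strictly smaller than $\Phi(I)$, contradicting the minimality in our choice of $I$. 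Equivalently, one can cast this as a tree-growing argument: each exchange introduces a newly revealed vertex, and the $2\Delta(H)$ slack guarantees that fresh vertices keep appearing, forcing an infinite sequence of distinct vertices in the finite graph $H$.

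The main obstacle is the bookkeeping around the potential: one must set up the exchange so that every step either strictly decreases $\Phi$ or directly produces a larger partial IT, avoiding cycles among partial ITs of equal potential. Getting the private-witness property from the minimality of $R$ is the key structural input, and matching the $\Delta(H)$-sized perturbation from a single swap against the $2\Delta(H)$ lower bound on each part size is precisely what converts ``enough room'' into a strict improvement, closing the contradiction.
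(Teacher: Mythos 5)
This statement is not proved in the paper at all: it is Haxell's independent-transversal theorem, imported verbatim (``cf.\ Theorem~2 in Haxell'') and used as a black box in the proof of Theorem~\ref{thm2}. So there is no in-paper argument to compare against, and your proposal has to stand on its own as a proof of Haxell's theorem. As written, it does not. The first concrete problem is the exchange step. Minimality of $R\subseteq I$ with $V_\ell\subseteq N_H(R)$ only gives you a witness $v_x\in V_\ell$ whose unique neighbour \emph{in $R$} is $x$; it says nothing about neighbours of $v_x$ in $I\setminus R$. Hence $I_x=(I\setminus\{x\})\cup\{v_x\}$ need not be independent, and the whole exchange machinery stalls at the first move. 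There is no cheap fix: a vertex of $V_\ell$ can easily have two or more neighbours in $I$ (the edges from $I$ into $V_\ell$ can number far more than $|V_\ell|$), so you cannot in general find a vertex of $V_\ell$ whose removal of a single element of $I$ restores independence.

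The second and more serious problem is that the core of the argument --- ``replacing $v_x$ by such a $w$, or iterating the swap, one produces a maximum partial IT whose potential $\Phi$ is strictly smaller'' --- is asserted rather than proved, and your own closing paragraph concedes that the bookkeeping here is the main obstacle. That bookkeeping \emph{is} the theorem: a single swap can perfectly well increase $\Phi(I)=|N_H(I)\setminus I|$ (the incoming vertex contributes up to $\Delta(H)$ newly dominated vertices, while the outgoing one releases only its privately dominated ones), and no local, one-step monovariant of this kind is known to drive the induction. The actual proofs (Haxell's original argument, or the Aharoni--Berger--Ziv variant) run a whole \emph{sequence} of exchanges, record the set $D$ of all discarded vertices and the set $T$ of all parts touched along the way, and close with a global double count: each of the $|T|$ touched parts must be entirely dominated, so it receives at least $2\Delta(H)$ edge-endpoints from a dominating set of size roughly $|T|$, which can emit only about $\Delta(H)\,|T|$ edges --- a contradiction. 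Your sketch names the right ingredients (maximal partial transversal, domination of the uncovered part, private witnesses, the $2\Delta$ versus $\Delta$ comparison) but does not assemble them into a terminating argument, so the proof is incomplete at exactly the point where the difficulty lies.
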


Finally, we prove Theorem~\ref{thm2}.

\begin{proof}[Proof of Theorem~\ref{thm2}]
Let $G$ be any given graph and let $d\ge 1$ be an integer. The graph $G\boxtimes K_d$ is isomorphic to a subgraph of $G\boxtimes K_{d+1}$, and so $\chi^d(G\boxtimes K_d)\le \chi^d(G\boxtimes K_{d+1})$ holds trivially and it suffices to show that $\chi(G)\le 2\chi^d(G\boxtimes K_d)$. Let us denote $k:=\chi^d(G\boxtimes K_d)$ and let $c:V(G)\times [d]\rightarrow [k]$ be a $d$-defective coloring of $G\boxtimes K_{d}$. Define an auxiliary graph $H$ as follows: The vertex set is $V(H):=V(G)\times [d]\times \{1,2\}$, and two distinct vertices $(g_1,i,a)$ and $(g_2,j,b)$ of $H$ are connected to each other if and only if $g_1g_2 \in E(G)$, $c(g_1,i)=c(g_2,j)$ and $a=b$. Note first that the maximum degree $\Delta(H)$ is bounded from above by $d$: For every vertex $(g_1,i,a)\in V(H)$, each of its neighbors is of the form $(g_2,j,a)$ where $(g_2,j)\in V(G\boxtimes K_{d})$ is a neighbor of $(g_1,i)$ in $G\boxtimes K_d$ which is assigned the same color as $(g_1,i)$ by the coloring $c$. Since $c$ by assumption was a $d$-defective coloring, the number of such neighbors can be at most $d$. Thus, $\Delta(H)\le d$, as desired.

Next, consider the vertex-partition $(V_g)_{g\in V(G)}$ of $V(H)$ given by the fibers $V_g:=\{g\}\times [d]\times \{1,2\}$ for every $g\in V(G)$. Since $|V_g|=2d\ge 2\Delta(H)$ for every $g\in V(G)$, the conditions of Theorem~\ref{thm:haxell} are met, implying that there exists an independent transversal $I$ for the partition $(V_g)_{g\in V(G)}$ in $H$. By definition, this means that for every $g\in V(G)$ we have $|I \cap V_g|=1$. For each $g\in V(G)$, let us define $c'(g)\in [k]\times \{1,2\}$ as the pair $(c(g,i),a)$, where $(i,a)\in [d]\times \{1,2\}$ is chosen uniquely such that $(g,i,a)\in I$. 

Finally, we claim that the so-defined mapping $c':V(G)\rightarrow [k]\times \{1,2\}$ forms a proper coloring of $G$. Towards a contradiction, suppose that there exists an edge $g_1g_2\in E(G)$ with $c'(g_1)=c'(g_2)$. Then there exist $i,j\in [d]$ and $a, b\in \{1,2\}$ such that $(g_1,i,a), (g_2,j,b)\in I$ as well as $(c(g_1,i),a)=c'(g_1)=c'(g_2)=(c(g_2,j),b)$. Hence, $g_1g_2\in E(G)$, $c(g_1,i)=c(g_2,j)$ and $a=b$. By definition of $H$, this implies that the members $(g_1,i,a)$ and $(g_2,j,b)$ of $I$ are adjacent in $H$, contradicting that $I$ is an independent set in $H$. This shows that $c$ is indeed a proper coloring of $G$, and hence (since it uses at most $2k$ colors), we have $\chi(G)\le 2k=2\chi^d(G\boxtimes K_d)$, as desired. This concludes the proof.
\end{proof}

\paragraph*{\textbf{Acknowledgments.} We would like to gratefully acknowledge Carla Groenland and Freddie Illingworth for discussions on this topic during the 2025 Oberwolfach Graph Theory workshop, which contributed to the results. We also would like to thank Krystal Guo, Ross Kang and Gabri\"{e}lle Zvaneveld for comments on the manuscript. This research was completed at the 2025 Graph Theory Workshop held at the Bellairs Research Institute in Holetown, Barbados.}

\bibliographystyle{abbrvurl}
\bibliography{references}

\end{document}